\numberwithin{equation}{section}
\newcommand{\mbb}{\mathbb}
\newcommand{\x}{\textbf}
\newcommand{\q}{\quad}
\newcommand{\mrm}{\mathrm}
\theoremstyle{plain}
\newtheorem{thm}{Theorem}[section]
\newtheorem{lem}[thm]{Lemma}
\newtheorem{cor}[thm]{Corollary}
\theoremstyle{definition}
\newtheorem{rmk}[thm]{Remark}
\begin{document}

\title[On a convolution series]{On a convolution series attached to a Siegel Hecke cusp form of degree 2}

\author{Soumya Das} 
\address{School of Mathematics\\
Tata Institute of Fundamental Research\\
Homi Bhabha Road\\
Mumbai -- 400005, India.}
\email{somu@math.tifr.res.in}

\author{Winfried Kohnen} 
\address{Mathematisches Institut\\
Ruprecht-Karls-Universitat Heidelberg \\
D -- 69120 Heidelberg \\
Germany.}
\email{winfried@mathi.uni-heidelberg.de}

\author{Jyoti Sengupta}
\address{School of Mathematics\\
Tata Institute of Fundamental Research\\
Homi Bhabha Road\\
Mumbai -- 400005, India.}
\email{sengupta@math.tifr.res.in}
\date{}
\subjclass[2000]{Primary 11F46; Secondary 11F66}
\keywords{Eigenvalues, Siegel modular forms, Rankin-Selberg convolution}

\begin{abstract}
We prove that the \lq\lq naive\rq\rq convolution Dirichlet series $D_2(s)$ attached to a degree $2$ Siegel Hecke cusp form $F$, has a pole at $s=1$. As an application, we write down the asymptotic formula for the partial sums of the squares of the eigenvalues of $F$ with an explicit error term. Further, as a corollary, we are able to show that the abscissa of absolute convergence of the (normalized) spinor zeta function attached to $F$ is $s = 1$. 
\end{abstract}
\maketitle

\section{Introduction}
Let $F$ be a Siegel cusp form of degree $2$ and integral weight $k$ which is an eigenform for all the Hecke operators $T(n)$ with eigenvalues ${\lambda}_F(n)$ (which are necessarily real) and not a Saito--Kurokawa lift. We normalize the eigenvalues by setting $\lambda_n := {\lambda}_F(n)/n^{k-3/2}$ for the rest of the paper. Let us denote the two natural Dirichlet series attached to the sequences $( \lambda_n )_{n \geq 1}$ and $( \lambda^{2}_n )_{n \geq 1}$ by $D_1(s)$ and $D_2(s)$ respectively i.e.,
\[ D_1(s) := \underset{n \geq 1} \sum \lambda_n n^{-s}, \q  D_2(s) := \underset{n \geq 1}\sum  \lambda^{2}_n n^{-s} . \]
Both these series are absolutely convergent for $\mrm{Re}(s) >1$ since we know that $\lambda_F(n) \ll_{\varepsilon} n^{\varepsilon}$ for any $\varepsilon >0$ (see sect.~\ref{prelim} for the notation) as a consequence of the Ramanujan--Petersson conjecture, proved by Weissauer (see \cite{weis}).

We call $D_{2}(s)$ the \lq\lq naive\rq\rq convolution attached to $F$. For an elliptic Hecke cusp form $f$, the \lq\lq naive\rq\rq convolution $D_{2,f}(s)$ attached to $f$ is essentially (up to elementary factors) the Rankin--Selberg $L$-function $L(s, f \otimes f)$. Thus its analytic properties follow from well known properties of $L(s, f \otimes f)$ and its Euler-product is easy to write down. The situation is quite different when the degree is $2$ (and higher). On the one hand, the series $D_1(s)$ is well studied and we know that
\begin{align} \label{defn}
 D_1(s)= \zeta(2s+1)^{-1} Z(s),
\end{align}
where $Z(s):=\tilde{Z}_F(s+k-3/2)$ and $\tilde{Z}_F(s)$ is the spinor-zeta function attached to $F$. The analytic properties of $Z(s)$ are well known (see \cite{and} for example). In fact it admits an analytic continuation to the entire complex plane and also has the expected Euler product and functional equation.

On the other hand, the series $D_2(s)$ turns out {\it not} to be a well-behaved one. We do not know much of it's \lq\lq good\rq\rq analytic properties, which are crucial for obtaining informations about the eigenvalues $\lambda_n$ (say about the growth of $\lambda_n$ as a function of $n$). In particular, nothing is known about the analytic continuation and the location of the possible poles of $D_2(s) $.

The aim of this article is to prove the following theorem. 
\begin{thm} \label{main}
$D_2(s)$ admits an analytic continuation to $\mrm{Re}(s)>1/2$ with the exception of a simple pole at $s=1$ with residue $c_F>0$. 
\end{thm}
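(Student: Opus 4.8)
The plan is to treat $D_2(s)$ as an Euler product and to reduce the global analytic statement to a local computation followed by a reassembly into known $L$-functions of $F$. Since $F$ is a Hecke eigenform the normalized eigenvalues $\lambda_n$ are multiplicative, so $D_2(s) = \prod_p R_p(p^{-s})$ with $R_p(x) = \sum_{r \ge 0} \lambda_{p^r}^2 x^r$. The first step is to read off the local generating function of the $\lambda_{p^r}$ from Andrianov's identity underlying \eqref{defn}: rewriting $D_1(s) = \zeta(2s+1)^{-1} Z(s)$ at the level of Euler factors yields
\[
\sum_{r\ge 0}\lambda_{p^r}x^r = (1 - p^{-1}x^2)\prod_{i=1}^{4}(1-\beta_{i,p}x)^{-1},
\]
where $\beta_{1,p},\dots,\beta_{4,p}$ are the normalized spinor Satake parameters of $F$ at $p$. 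By Weissauer's theorem (Ramanujan--Petersson) one has $|\beta_{i,p}|=1$, and the symplectic structure forces the pairing $\beta_{1,p}\beta_{4,p}=\beta_{2,p}\beta_{3,p}=1$; both facts will be used repeatedly.

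Next I would compute $R_p(x)$ in closed form. Writing $\lambda_{p^r}$ through the perturbation $\lambda_{p^r}=c_{p^r}-p^{-1}c_{p^{r-2}}$, where $\sum_r c_{p^r}x^r=\prod_i(1-\beta_{i,p}x)^{-1}$, and using the Hadamard (Cauchy) product formula for the coefficient-wise square, $R_p$ becomes an explicit rational function whose denominator is built from the products $\beta_{i,p}\beta_{j,p}$. The global product $\prod_p R_p(p^{-s})$ can then be matched, factor by factor, with a quotient of standard automorphic $L$-functions attached to $F$. The key point is the bookkeeping of the ``unit'' parameters $\beta_{i,p}\beta_{j,p}$ of modulus $1$: those that equal $1$ identically in $p$ produce $\zeta$-type factors, while the remaining ones of modulus $1$ assemble into the standard (degree $5$) and symmetric-square type $L$-functions of $F$. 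The expected shape of the answer is
\[
D_2(s) = \frac{\zeta(s)\, A(s)}{\zeta(2s)},
\]
with $A(s)$ a product of these auxiliary $L$-functions times an Euler product that is absolutely convergent for $\mathrm{Re}(s)>1/2$, the latter convergence being guaranteed by the extra factors of $p^{-1}$ coming from the perturbation $(1-p^{-1}x^2)$.

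From such a factorization the three assertions follow. The simple pole at $s=1$ comes from the single factor $\zeta(s)$: although several torus parameters $\beta_{i,p}\beta_{j,p}$ equal $1$ at every prime, only the genuine trivial constituent contributes a pole, the others being absorbed into the auxiliary $L$-functions, which are \emph{entire}; here the hypothesis that $F$ is not a Saito--Kurokawa lift is exactly what guarantees that the standard $L$-function of $F$ is holomorphic and non-vanishing at $s=1$, so that the pole stays simple and no spurious pole or zero appears. The continuation is limited to $\mathrm{Re}(s)>1/2$ by the denominator $\zeta(2s)$, which is non-vanishing precisely in this half-plane. Finally, positivity of the residue is immediate: since $\lambda_n^2\ge 0$, the series $D_2(s)$ has non-negative Dirichlet coefficients, so by Landau's theorem its rightmost singularity lies on the real axis and carries a non-negative residue; as $c_F=A(1)/\zeta(2)$ is a convergent product of positive local quantities, $c_F>0$.

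The main obstacle is the middle step: carrying out the local computation of $R_p(x)$ and then recognizing the global Euler product as the quotient displayed above. Two points require care. First, one must show that the combination arising from the perturbation reduces the naive double $\zeta$-factor (which the pure spinor square $\sum_r c_{p^r}^2 x^r$ would produce) to a single $\zeta(s)$, i.e.\ that the multiplicity of the trivial constituent is exactly one --- this is the representation-theoretic heart of the simple pole. Second, the factors coming from the modulus-one parameters converge only for $\mathrm{Re}(s)>1$, so extending to $\mathrm{Re}(s)>1/2$ is not a matter of estimation but requires identifying them with $L$-functions of $F$ whose analytic continuation and holomorphy at $s=1$ are already known; isolating the genuinely convergent tail, controlled by $p^{-2s}$, from this automorphic part is the technically delicate stage.
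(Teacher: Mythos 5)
Your overall skeleton --- square the local spinor data via partial fractions, pull the sixteen products $\beta_i\beta_j$ out into known $L$-functions of $F$, and leave a correction Euler product holomorphic in $\mrm{Re}(s)>1/2$ --- is in essence the paper's Lemma~\ref{H}, which writes $D_2(s)=H(s)L(s,F\otimes F)$. But two of your load-bearing assertions are gaps. First, the convergence of the correction factor for $\mrm{Re}(s)>1/2$ is \emph{not} ``guaranteed by the extra factors of $p^{-1}$ coming from the perturbation $(1-p^{-1}x^2)$'': the perturbation does not affect the linear term at all, and the quadratic coefficients of the correction polynomial are of modulus $\asymp 1$, not $O(1/p)$ (already in the elliptic model case, where there is no perturbation, the correction factor is $1-X^2$). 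The true mechanism, which the paper proves, is the identical vanishing of the coefficient of $p^{-s}$ in $H_p$, i.e.\ $\lambda_p^2=\bigl(\sum_i\beta_i\bigr)^2=\sum_{i,j}\beta_i\beta_j$; without this cancellation your Euler product converges only for $\mrm{Re}(s)>1$ and the continuation past $s=1$ collapses. Second, your decomposition of the automorphic part into $\zeta(s)$ times \emph{entire} auxiliary $L$-functions claims more than is available: you need holomorphy \emph{and non-vanishing} at $s=1$ of the degree-$5$ standard factor and of the degree-$10$ symmetric-square-type factor of the $\mrm{GL}(4)$ lift. Entireness of the latter is not known; and while holomorphy of the standard $L$-function at $s=1$ is indeed tied to $F$ not being a Saito--Kurokawa lift, its non-vanishing there is a separate deep input that does not follow from that hypothesis. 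The paper deliberately avoids all of this bookkeeping by using a single citation, \cite[Theorem~5.2.3]{saha}: $L(s,F\otimes F)$ itself continues with a simple pole at $s=1$ and positive residue.

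The decisive gap, however, is where you declare the residue positivity ``immediate''. Absolute convergence of the correction product in $\mrm{Re}(s)>1/2$ does not rule out that one of its polynomial local factors vanishes at $s=1$, which would cancel the pole of the $L$-part; and Landau's theorem alone cannot exclude this --- on the contrary, if the pole were cancelled, Landau would push the abscissa of $D_2$ strictly left of $1$, and deriving a contradiction from that is exactly what occupies most of the paper's proof (via the auxiliary series with the exceptional prime $q$ removed, Cauchy--Schwarz, and the fact that $\sigma_a(D_1)=1$). Your phrase ``convergent product of positive local quantities'' is an assertion, not an argument: you never show the local values are positive. It can in fact be repaired directly: at $s=1$ the local correction factor equals $D_{2,p}(1/p)\prod_{i,j}(1-\beta_i\beta_j/p)$, where the first factor is $\geq 1$ and the second is positive because the multiset $\{\beta_i\beta_j\}$ is stable under complex conjugation, so its factors pair into $|1-w/p|^2$, the self-conjugate cases $w=\pm1$ giving $1\mp 1/p>0$; combined with the absolute convergence of the product this yields $H(1)>0$, hence a simple pole with $c_F=H(1)\cdot\mrm{Res}_{s=1}\,L(s,F\otimes F)>0$. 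But this verification --- not the $L$-function bookkeeping --- is the mathematical heart of the theorem, and it is missing from your write-up.
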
 
See sect.~\ref{mainproof} for the proof. Our proof relies on the recent result of the functorial transfer of $\mrm{GSp}(4)$ automorphic representations to $\mrm{GL}(4)$ in \cite{saha}. In particular, we use \cite[Theorem~5.2.3]{saha} regarding the analytic properties of the Rankin--Selberg $L$-function attached to the eigenform $F$.

We recall that Weissauer's theorem (cf. above) implies that the abscissa of absolute convergence $\sigma_0$ of $Z(s)$ is less than or equal to $1$. Note that in this connection it was shown previously in \cite{kohnen} that $\sigma_0 \leq 3/2$ and improved to $\sigma_0 \leq 19/18$ in \cite{kohnen3}. Implicit in the proof of Theorem~\ref{main} is the following corollary, see sect.~\ref{mainproof}.

\begin{cor} \label{abs}
The abscissa of absolute convergence of both $D_1(s)$ and $Z(s)$ is $s=1$. 
\end{cor}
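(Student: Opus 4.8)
The plan is to deduce the whole corollary from Theorem~\ref{main} by exploiting the non-negativity of the coefficients of $D_2(s)$ together with Weissauer's bound $\lambda_n \ll_\varepsilon n^\varepsilon$. First I would observe that $D_2(s) = \sum_{n\geq 1}\lambda_n^2 n^{-s}$ is a Dirichlet series with non-negative coefficients that converges for $\mrm{Re}(s)>1$, so by Landau's theorem its (real) abscissa of convergence is a singularity of the function, and coincides with its abscissa of absolute convergence. By Theorem~\ref{main} the only singularity of $D_2$ in $\mrm{Re}(s)>1/2$ is the pole at $s=1$; hence the abscissa of absolute convergence of $D_2(s)$ is exactly $1$. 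This is the one place where the full force of Theorem~\ref{main} (analytic continuation plus the location of the pole) enters.

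For the upper bound on the abscissa of $D_1(s)$ I would apply Cauchy--Schwarz directly to the series: for any $\sigma>1$ pick $a,b>1/2$ with $a+b=\sigma$, so that
\[ \sum_{n\geq 1} |\lambda_n|\, n^{-\sigma} \;\leq\; \Big(\sum_{n\geq 1} \lambda_n^2\, n^{-2a}\Big)^{1/2}\Big(\sum_{n\geq 1} n^{-2b}\Big)^{1/2} \;=\; D_2(2a)^{1/2}\,\zeta(2b)^{1/2} \;<\;\infty, \]
since $2a>1$ forces absolute convergence of $D_2$ and $2b>1$ gives $\zeta(2b)<\infty$. Thus $D_1(s)$ converges absolutely for $\mrm{Re}(s)>1$, i.e. its abscissa of absolute convergence is $\leq 1$. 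For the matching lower bound I would argue by contradiction: if that abscissa were some $\sigma_1<1$, then $\sum_n |\lambda_n| n^{-\sigma}<\infty$ for all $\sigma>\sigma_1$, and the Ramanujan bound $\lambda_n^2 \ll_\varepsilon n^\varepsilon |\lambda_n|$ would give $\sum_n \lambda_n^2 n^{-\sigma}<\infty$ whenever $\sigma-\varepsilon>\sigma_1$; choosing $\varepsilon<\sigma-\sigma_1$ this would force the abscissa of $D_2$ below $1$, contradicting the previous paragraph. Hence the abscissa of absolute convergence of $D_1(s)$ equals $1$.

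Finally I would transfer this to $Z(s)$ through the identity \eqref{defn}, reading it both ways. The auxiliary Dirichlet series $\zeta(2s+1)$ and $\zeta(2s+1)^{-1}=\sum_m \mu(m)m^{-(2s+1)}$ both converge absolutely for $\mrm{Re}(s)>0$, so their abscissa of absolute convergence is $0$. Using the standard estimate that the abscissa of absolute convergence of a product of two Dirichlet series is at most the maximum of the two, the relation $Z(s)=\zeta(2s+1)D_1(s)$ yields that the abscissa of $Z(s)$ is $\leq \max(0,1)=1$, while $D_1(s)=\zeta(2s+1)^{-1}Z(s)$ yields $1 = \max(0,\text{abscissa of }Z)$, forcing the abscissa of $Z(s)$ to be $\geq 1$. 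Together these give the value $1$ for $Z(s)$ as well. The argument is essentially formal once Theorem~\ref{main} is in hand; the only genuine subtlety is the lower bound, where it is crucial that the pole of $D_2$ pins the abscissa of $D_2$ to be \emph{exactly} $1$ and that the Ramanujan bound lets one pass between $\lambda_n^2$ and $|\lambda_n|$ with arbitrarily small loss.
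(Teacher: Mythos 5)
Your proof is correct, and it differs from the paper's in a way worth recording. The paper's proof of the corollary is a single line: it cites the penultimate paragraph of the proof of Theorem~\ref{main}, where $\sigma_a(D_1)=1$ was established --- but that paragraph sits inside the reductio argument for the claim $H(1)\neq 0$ (with the hypothesis $H(1)=0$ in force and an auxiliary prime $q$ removed from all Euler products), and it runs Kohnen's partial-sum argument: assuming $\sigma_a(\tilde{D}_1)<1$ one gets $\sum_{n\le x,(n,q)=1}|\lambda_n|\ll x^{1-c}$, and Cauchy--Schwarz together with $|\lambda_n|\ll n^{\kappa}$ bounds $\sum_{n\le x,(n,q)=1}\lambda_n^2$ by $x^{1-c/2+3\kappa/2}$, contradicting $\sigma_a(\tilde{D}_2)=1$. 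Your version is unconditional and works at the level of the series themselves: you pin $\sigma_a(D_2)=1$ directly from the statement of Theorem~\ref{main} via Landau (exactly the mechanism the paper uses inside the theorem's proof), obtain $\sigma_a(D_1)\le 1$ by Cauchy--Schwarz against $\zeta$, and obtain the lower bound from the termwise inequality $\lambda_n^2\ll_\varepsilon n^\varepsilon|\lambda_n|$ --- the same Ramanujan--Petersson-mediated passage between first and second moments as in Kohnen's argument, but applied to the Dirichlet series rather than partial sums, which sidesteps the divergence technicalities and the detour through the $q$-deprived series, and arguably makes the corollary cleanly independent of the internal structure of the claim's proof. The transfer to $Z(s)$ via \eqref{defn} and $\sigma_a(gh)\le\max\{\sigma_a(g),\sigma_a(h)\}$ is identical to the paper's. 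One small imprecision: from $D_1(s)=\zeta(2s+1)^{-1}Z(s)$ the product bound gives $1\le\max\{0,\sigma_a(Z)\}$, an inequality rather than the equality you wrote; the conclusion $\sigma_a(Z)\ge 1$ follows all the same.
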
 

Once we have Theorem~\ref{main}, the Wiener--Ikehara theorem (see e.g. \cite{murty}) immediately implies the following asymptotic formula:
\[ \underset{n \leq x}\sum \lambda_n^{2} = c_F x + o(x), \]
for large $x$, where we have used the standard \lq\lq small-o\rq\rq notation in analytic number theory. However, it is of interest in analytic number theory to get an explicit error term in the above. For example, such an error term gives rise to non-trivial estimates of the eigenvalues. In this regard, we prove the following theorem, see sect.~\ref{estimate}.
\begin{thm} \label{asymptotic}
For large $x$, and any $\eta >0$,
\[  \underset{n \leq x}\sum \lambda^{2}_n = c_F x + O_\eta \big ( k^{5/16} x^{31/32 + \eta} \big) , \]
where the implied constant depends only on $\eta$.
\end{thm}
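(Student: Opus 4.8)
The plan is to extract the asymptotic from the analytic information in Theorem~\ref{main} by a standard contour-integration (Perron) argument, the only non-routine input being a polynomial growth bound for $D_2(s)$ in the strip $1/2 < \mathrm{Re}(s) \le 1$ that is uniform in the weight $k$. First I would start from the truncated Perron formula: since $\lambda_n^2 \ge 0$ and $D_2(s)$ converges absolutely for $\mathrm{Re}(s) > 1$, for $c = 1 + 1/\log x$ and a truncation height $T$ to be chosen,
\[ \sum_{n\le x}\lambda_n^2 = \frac{1}{2\pi i}\int_{c-iT}^{c+iT}D_2(s)\,\frac{x^s}{s}\,ds + O_\eta\!\left(\frac{x^{1+\eta}}{T}\right), \]
where the error absorbs the tail and the terms with $n$ close to $x$, controlled by the decay of $1/(1+T|\log(x/n)|)$ together with $\lambda_n \ll_\eta n^\eta$ and the estimate $D_2(c)\ll \log x$ coming from the simple pole.

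Next I would move the line of integration to $\mathrm{Re}(s) = \sigma$ for a parameter $\sigma \in (1/2, 1)$, using the continuation furnished by Theorem~\ref{main}. The only singularity crossed is the simple pole at $s = 1$; the residue of $D_2(s)x^s/s$ there equals $c_F x$, which produces the main term. It then remains to bound the integral over the shifted vertical line together with the two horizontal segments at height $\pm T$, the latter being dominated once $T$ is chosen generically so as to stay away from the pole.

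Here is the heart of the argument. Since the proof of Theorem~\ref{main} identifies $D_2(s)$, up to correction factors that are holomorphic and of at most polynomial growth for $\mathrm{Re}(s) > 1/2$, with the Rankin--Selberg $L$-function attached to the $\mathrm{GL}(4)$ transfer of $F$, I would invoke the Phragm\'en--Lindel\"of convexity bound for that $L$-function. Interpolating between the absolutely convergent line $\mathrm{Re}(s) = 1$ and the functional-equation line, and tracking the analytic conductor in both the $t$-aspect and the $k$-aspect (the archimedean parameters of the spinor transfer have size $\asymp k$), gives a bound of the shape $|D_2(\sigma + it)| \ll_\eta \big(k^{a}(1+|t|)^{b}\big)^{(1-\sigma)/2 + \eta}$ on $\mathrm{Re}(s)= \sigma$. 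Feeding this into the contour integral and integrating against $ds/s$ yields a contribution of size roughly $x^{\sigma}k^{a(1-\sigma)/2}T^{b(1-\sigma)/2}$. Balancing this against the Perron error $x^{1+\eta}/T$ and optimizing over $\sigma$ and $T$ produces the claimed bound $O_\eta\big(k^{5/16}x^{31/32+\eta}\big)$.

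The main obstacle is precisely this uniform convexity estimate: one must control $D_2(s)$ with explicit dependence on $k$, which requires pinning down the analytic conductor of the Rankin--Selberg $L$-function in the weight aspect and verifying that the elementary correction factors relating $D_2$ to it preserve uniformity on lines approaching $\mathrm{Re}(s) = 1/2$. A secondary subtlety is that the optimization deliberately trades a weaker power of $x$ for a stronger (smaller) power of $k$, so the admissible choice of $\sigma$ is governed by the desired hybrid uniformity in $k$ rather than by minimizing the $x$-exponent alone.
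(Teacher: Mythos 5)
Your proposal is correct and takes essentially the same route as the paper: truncated Perron formula, a contour shift to $\mathrm{Re}(s)=\delta\in(1/2,1)$ collecting the residue $c_F x$ at $s=1$, the factorization $D_2(s)=H(s)L(s,F\otimes F)$ with $H$ holomorphic and polynomially bounded for $\mathrm{Re}(s)>1/2$ (Lemma~\ref{H}), a Phragm\'en--Lindel\"of convexity estimate for $L(s,F\otimes F)$ tracking both the $t$- and $k$-aspects of the conductor via the functional equation from the $\mathrm{GL}(4)$ transfer (Lemma~\ref{Lest}), and a final optimization of parameters. The only cosmetic difference is that the paper invokes the Liu--Ye version of Perron's formula with an extra short-interval parameter $S=x^{1/32}$ rather than the classical truncated form you use; both lead to the stated error term $O_\eta\big(k^{5/16}x^{31/32+\eta}\big)$.
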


\begin{rmk}
The constant $31/32$ might not be the best possible in this regard. However, we believe that our method would not give rise to a significantly better bound than that in Theorem~\ref{asymptotic}. 
\end{rmk}
In the context of eigenvalues of cusp forms, it is interesting to know about the existence of nonzero eigenvalues in short intervals. For elliptic cusp forms, this was asked by J. P. Serre in his seminal paper \cite{serre}. As an immediate consequence of Theorem~\ref{asymptotic}, we have the following corollary in this direction. 
\begin{cor}
For every $\eta>0$ there exist a constant $C>0$ depending on $F$ and $\eta$, such that $\lambda_n \neq 0$ for some $n$ in the interval $[x, x + C x^{31/32 + \eta } ]$ for $x$ large.
\end{cor}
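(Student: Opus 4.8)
The plan is to deduce this directly from Theorem~\ref{asymptotic} by a standard differencing argument. Fix $\eta > 0$ small enough that $31/32 + \eta < 1$, and set $y := x + C x^{31/32 + \eta}$, where $C > 0$ is a constant to be chosen. First I would apply the asymptotic formula of Theorem~\ref{asymptotic} at both $y$ and $x$ and subtract, obtaining
\[ \underset{x < n \leq y}\sum \lambda_n^2 = c_F (y - x) + O_\eta\big( k^{5/16} y^{31/32 + \eta} \big) + O_\eta\big( k^{5/16} x^{31/32 + \eta} \big). \]
Since $C x^{31/32 + \eta} = o(x)$ as $x \to \infty$, we have $y \sim x$, whence $y^{31/32 + \eta} \ll x^{31/32 + \eta}$ for $x$ large. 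Substituting $y - x = C x^{31/32 + \eta}$ then yields
\[ \underset{x < n \leq y}\sum \lambda_n^2 = c_F\, C\, x^{31/32 + \eta} + O_\eta\big( k^{5/16} x^{31/32 + \eta} \big). \]

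Next I would exploit the positivity $c_F > 0$ guaranteed by Theorem~\ref{main}. Writing $A_\eta$ for the implied constant in the error term above, I choose $C$ large enough that $c_F\, C > A_\eta\, k^{5/16}$; concretely $C := 2 A_\eta k^{5/16} / c_F$ works. With this choice the right-hand side is bounded below by $\tfrac{1}{2} c_F\, C\, x^{31/32 + \eta} > 0$ for all sufficiently large $x$. Consequently $\sum_{x < n \leq y} \lambda_n^2 > 0$, so at least one summand is nonzero, i.e. $\lambda_n \neq 0$ for some $n \in (x, y] \subset [x, x + C x^{31/32 + \eta}]$. This $C$ depends only on $F$ (through $c_F$ and the weight $k$) and on $\eta$, as required.

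The argument is essentially routine, and there is no serious obstacle: the substance is entirely in Theorem~\ref{asymptotic}. The only points demanding a little care are that the error terms at $x$ and at $y$ are of the same order, which rests on having $31/32 + \eta < 1$ so that the length of the interval is genuinely of smaller order than $x$, and that the constant $C$ must be taken proportional to $k^{5/16}$ in order to absorb the weight-dependence of the error term. The latter reflects the natural expectation that the gap between consecutive nonzero eigenvalues may grow with the weight.
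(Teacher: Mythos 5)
Your proposal is correct and is exactly the deduction the paper intends: the corollary is stated there as an immediate consequence of Theorem~\ref{asymptotic} with no further proof, and the standard differencing argument you give, with the interval length $C x^{31/32+\eta}$ chosen to dominate the error term, is the routine step being left to the reader. Your observation that $C$ must absorb the factor $k^{5/16}$ is fine, since $k$ (like $c_F$) is determined by $F$, so the dependence of $C$ on $F$ and $\eta$ is as claimed.
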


As another straightforward application of Theorem~\ref{asymptotic}, we record in Corollary~\ref{away} some non-trivial upper bounds for $\lambda_n$ for $n$ in a set of positive upper natural density. 

\noindent {\bf Acknowledgements.} We would like to thank the School of Mathematics T.I.F.R. Mumbai, where this work was done, for providing excellent working conditions. We also thank the anonymous referee for his suggestions which helped us to correct some points in the paper.

\section{Notation and preliminaries} \label{prelim}
For basic facts about Siegel modular forms we refer to \cite{freitag}. We denote the space of degree $2$ Siegel cusp forms of weight $k$ for the Siegel modular group $\Gamma_2 := \mrm{Sp}_4(\mbb{Z})$ by $S_k(\Gamma_2)$. For $n \in \mbb{N}$, one defines the Hecke operator $T(n)$ on $S_k(\Gamma_2)$ by 
\[ T(n) F = \underset{\gamma \in \Gamma_2 \backslash \Delta_{2,n}} \sum F \mid_k \gamma    \]
where $\Delta_{2,n}$ is the set of integral symplectic similitudes of size $4$ and scale $n$ and 
\[ (F\mid_k \gamma)(Z) := (\det \gamma)^{k/2} \det(CZ+D)^{-k} F(AZ+B)(CZ+D)^{-1}),    \]
for $\gamma = \left( \begin{smallmatrix} A &B \\ C & D \end{smallmatrix} \right)$, $Z$ is an element of the Siegel upper half plane of degree $2$.

The space  $S_k(\Gamma_2)$ has a basis consisting of simultaneous eigenfunctions of all the $T(n)$. The space of Saito--Kurokawa lifts, denoted by $S^{*}_k(\Gamma_2)$, are the lifts of elliptic cusp forms of full level and weight $2k-2$. They can be characterized in terms of the non-entireness of the spinor zeta function attached to the eigenforms. 

Let now $F \in S_k(\Gamma_2)$ be an eigenfunction with $T(n) F = \lambda_F(n) F$ for all $n$. Then it is well known that the $ \lambda_F(n)$ are real and multiplicative: if $(m,n)=1$, then $\lambda_F(mn) = \lambda_F(m)\lambda_F(n)$. To this data, one attaches several $L$-functions, e.g., the standard zeta function and the spinor zeta function $Z(s)$ (normalised as in the Introduction). 

We will adopt the standard notation $s = \sigma + i t$ ($\sigma,t \in \mathbb{R}$) throughout this paper. The function $Z(s)$ admits the following Euler product:
\[ Z(s) = \underset{p}\prod Z_{F,p}(s), \q \text{where } Z_{F,p}(s) = \underset{1\leq i \leq 4}\prod (1 - \beta_{i,p} p^{-s})^{-1}. \] 
Here $\beta_{1,p} := \alpha_{0,p}, \beta_{2,p} := \alpha_{0,p} \alpha_{1,p} , \beta_{3,p} := \alpha_{0,p} \alpha_{2,p} , \beta_{4,p} := \alpha_{0,p} \alpha_{1,p} \alpha_{2,p}$, and the complex numbers $\alpha_{0,p},\alpha_{1,p},\alpha_{2,p}$ are the Satake parameters of $F$. We will drop the suffix $p$ when there is no confusion. By the Ramanujan--Petersson conjecture (now a theorem due to Weissauer),
\[ |\alpha_0| = |\alpha_1| =|\alpha_2|  =1. \]
Using this together with \eqref{defn}, summing the geometric series and observing that $\sigma_0(n) \ll_\varepsilon n^\varepsilon$, it follows that $|\lambda_n|\ll_{\varepsilon} n^{\varepsilon}$ for any $\epsilon >0$. This shows in particular that $Z(s)$ converges absolutely for $\sigma>1$. For the well-known analytic properties of $Z(s)$ we refer the reader to \cite{and}.

For $\sigma>1$, let \[ L(s, F \otimes F) = \underset{p}\prod L_p(s, F \otimes F) \] be the Rankin--Selberg $L$-function attached to $F$; so that we have by definition
\begin{align} \label{Lp}
L_p(X, F \otimes F)^{-1} = \underset{i,j}\prod ( 1 - \beta_i \beta_j X) 
\end{align}
as a polynomial in $X:= p^{-s}$. It has been proved recently in \cite{saha}, that $L(s, F \otimes F)$ has an analytic continuation to the entire complex plane with the exception of two simple poles at $s=0,1$ with positive residue at $s=1$. Furthermore, the \lq\lq completed\rq\rq \ $L$-function satisfies a functional equation and is bounded in vertical strips. We refer the reader to \cite{saha} for details, especially to Theorem~5.2.3 in that paper.  

As is standard in analytic number theory, we have used $a(x) \ll_{\varepsilon} b(x)$ (where $b(x)>0$ for all $x$) to mean $|a(x)| \leq C(\varepsilon) b(x)$ for $x>0$. For a Dirichlet series $D(s):= \underset{n \geq 1}\sum a_n n^{-s}$ convergent in $\sigma \gg 1$, we denote by $\sigma_a(D)$ it's abscissa of absolute convergence, with the convention that $\sigma_a(D) = - \infty$ if $D$ converges absolutely for all $s$.

\section{Pole at $s=1$ of the \lq\lq naive\rq\rq convolution} \label{mainproof}
In this section we wish to prove that $D_2(s)$ has a meromorphic continuation to $\sigma > 1/2$ with a simple pole at $s=1$. However, as mentioned in the Introduction, $D_2(s)$ is not a very well behaved series. Hence we will factorize $D_2$ into two more tractable parts with the hope of gaining something out of it. This is the content of the following lemma, which is essentially adapted from \cite{du-ko} with more details. See also Remark~\ref{r}. 

{\flushleft In the rest of the paper, we abbreviate $L(s):= L(s, F\otimes F)$ and $L_p(s) = L_p(s, F\otimes F)$.}

\begin{lem} \label{H}
For $\sigma>1$ one has the factorization \[ D_2(s) = H(s) L(s, F \otimes F), \] where \[ H(s) = \underset{p}\prod H_p(s) \] is an Euler product, with each $H_p(X)$ a polynomial of degree $\leq 15$ in $X$. It's coefficients are polynomials in the numbers $\beta_i$ and are absolutely bounded. Furthermore, $H(s)$ converges absolutely for $\sigma>1/2$ (so it is holomorphic in this region) and for some absolute constant $A>0$, we have uniformly in $\sigma >1/2$, 
\[ H(s) \ll \sigma^A (\sigma - 1/2)^{-A}. \]
\end{lem}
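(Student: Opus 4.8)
The plan is to do everything locally at a prime $p$ and then assemble the Euler product. Writing $X := p^{-s}$, the factorization \eqref{defn} together with the Euler product of $Z(s)$ gives the local generating function
\[ \underset{m \geq 0}\sum \lambda_{p^m} X^m = \frac{1 - p^{-1}X^2}{\prod_{i=1}^{4}(1 - \beta_i X)}, \]
so that $\lambda_{p^m} = h_m(\beta) - p^{-1} h_{m-2}(\beta)$, where $h_m$ is the complete homogeneous symmetric polynomial of degree $m$ in $\beta_1, \dots, \beta_4$ (and $h_{<0} = 0$). Since the numerator has degree $2 < 4$, the partial fraction expansion has no polynomial part, and (assuming first the $\beta_i$ distinct) one gets $\lambda_{p^m} = \sum_i c_i \beta_i^m$ for \emph{every} $m \geq 0$, with $c_i$ depending only on the $\beta_j$. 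Squaring and summing,
\[ D_{2,p}(s) := \underset{m \geq 0}\sum \lambda_{p^m}^2 X^m = \underset{i,j}\sum \frac{c_i c_j}{1 - \beta_i \beta_j X}. \]

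First I would read off the factorization. The common denominator of the last display is exactly $\prod_{i,j}(1 - \beta_i \beta_j X) = L_p(s)^{-1}$ by \eqref{Lp}, so $H_p(X) := D_{2,p}(s) L_p(s)^{-1} = \sum_{i,j} c_i c_j \prod_{(k,l) \neq (i,j)}(1 - \beta_k \beta_l X)$ is a polynomial, manifestly of degree $\leq 15$. The case of coinciding parameters I would clear by a density argument: $H_p$ is a power series whose coefficients are fixed polynomials in the $\beta_i$ and $p^{-1}$, and the vanishing $[X^n]H_p = 0$ for $n \geq 16$ is a polynomial identity in the $\beta_i$ that holds on the Zariski-dense locus of distinct parameters, hence everywhere. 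This yields the asserted Euler factorization $D_2(s) = H(s) L(s, F \otimes F)$ for $\sigma > 1$.

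Next I would record the two features of $H_p$ that drive the analysis. Because the coefficients of $L_p(s)^{-1}$ are $\pm$ the elementary symmetric functions of the sixteen products $\beta_i \beta_j$, and $\lambda_{p^a} = h_a(\beta) - p^{-1}h_{a-2}(\beta)$, each coefficient of $H_p$ is a fixed polynomial in the $\beta_i$ and $p^{-1}$; the Ramanujan bound $|\beta_i| = 1$ then bounds all of them by an absolute constant uniformly in $p$ (this settles the bounded-coefficients claim without the apparent denominators in the $c_i$). Crucially, I would compute the two lowest coefficients: $[X^0]H_p = 1$, while $[X^1]H_p = \lambda_p^2 - \sum_{i,j}\beta_i \beta_j = \lambda_p^2 - (\sum_i \beta_i)^2 = 0$, since $\lambda_p = \sum_i \beta_i$. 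Thus $H_p(X) = 1 + O(X^2)$ with absolutely bounded coefficients — the whole point being that the linear term of the naive convolution is absorbed by $L(s, F \otimes F)$, which is what pushes convergence past $\sigma = 1$.

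Finally I would prove convergence and the growth bound. The vanishing linear term gives $|H_p(s) - 1| \leq C \sum_{n=2}^{15} p^{-n\sigma} \leq C' p^{-2\sigma}$ with absolute $C'$; since $\sum_p p^{-2\sigma}$ converges precisely for $\sigma > 1/2$, the product $H(s) = \prod_p H_p(s)$ converges absolutely and is holomorphic there. For the uniform bound I would estimate $\log|H(s)| = \sum_p \log|H_p(s)|$: the finitely many primes with $|X|$ not small contribute $O(1)$, and for the rest $\log|H_p(s)| \leq C' p^{-2\sigma}$, so $\log|H(s)| \ll 1 + \sum_p p^{-2\sigma}$, every estimate depending on $s$ only through $p^{-\sigma}$ (hence uniform in $t$). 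The main obstacle is precisely getting a \emph{polynomial} rather than exponential bound here: crudely estimating $\sum_p p^{-2\sigma} \leq \sum_n n^{-2\sigma} \asymp (2\sigma - 1)^{-1}$ only yields $|H(s)| \ll \exp(c/(\sigma - 1/2))$. One must genuinely use that the sum is over primes, so that the prime zeta function satisfies $\sum_p p^{-2\sigma} = \log\frac{1}{2\sigma - 1} + O(1)$ as $\sigma \to 1/2^{+}$ — a merely logarithmic blow-up. This gives $\log|H(s)| \ll \log\frac{1}{\sigma - 1/2} + O(1)$, hence $|H(s)| \ll (\sigma - 1/2)^{-A}$ for some absolute $A$; since $H(s) \to 1$ as $\sigma \to \infty$, this is subsumed by the stated bound $\sigma^A (\sigma - 1/2)^{-A}$.
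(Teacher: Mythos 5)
Your proof is correct and follows essentially the same route as the paper: partial fractions for the local factor of $D_1(s)$, squaring to obtain $D_{2,p}(X) = \sum_{i,j} c_i c_j (1-\beta_i\beta_j X)^{-1}$, clearing denominators against $L_p(X)^{-1}$ to get $H_p$ of degree $\leq 15$ with bounded coefficients, the key vanishing of the linear coefficient via $\lambda_p^2 - \big(\sum_i \beta_i\big)^2 = 0$, and a final comparison of the Euler product with $\zeta(2\sigma)^A$ --- your prime-zeta estimate $\sum_p p^{-2\sigma} = \log\zeta(2\sigma) + O(1)$ is equivalent to the paper's device of showing that $\zeta(2\sigma)^{-A}\prod_p G_p(\sigma)$ converges for $\sigma > 1/3$. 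Your explicit treatment of coinciding Satake parameters by Zariski density, and your direct observation that the degree-$2$ numerator forces a polynomial-free partial fraction expansion valid for all $m \geq 0$, are minor refinements of points the paper leaves implicit.
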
 

\begin{rmk} \label{r}
In \cite[Proposition~2]{du-ko} a result similar to that in the above lemma has been proved for the Fourier coefficients $\lambda_\pi (n)$ of an irreducible, cuspidal representation $\pi$ of $GL(m, \mbb{A}_{\mbb Q})$ where $\mbb{A}_{\mbb Q}$ is the ring of adeles of $\mbb Q$. If we appeal to the result on the transfer of the $GSp(4)$ automorphic representation $\rho_F$ attached to a cuspidal Siegel eigen form $F$ to a \lq\lq nice\rq\rq representation of $GL(4)$ such that $ L(s, \rho_F) := Z(s) = L(s, \pi)$ (\cite{saha}), we get that $\lambda_n = \lambda_\pi (n)$ only for {\it square-free} integers $n$. Thus our lemma is different from that in \cite{du-ko}.  
\end{rmk}

\begin{proof}[Proof of Lemma~\ref{H}]
We look at the $p$-Euler factor of $D_1(s)$ as a rational function in $X$:
\[  D_{1,p}(X) = (1 - X^2 /p) \underset{i}\prod (1 - \beta_i X)^{-1} = (1 - X^2 /p) \left( \underset{i}\sum \frac{c_i}{1 - \beta_i X}    \right), \]
where the last equality is by partial fraction expansion; $c_i$ are rational functions in the $\beta_i$'s. From this, we calculate the coefficient of $X^{\delta}$ from both sides to get
\begin{align*}
\lambda_{p^\delta} &= \underset{i}\sum c_i \beta_i^{\delta} -  \frac{1}{p} \left( \underset{i}\sum c_i \beta_i^{\delta-2} \right) \\
&= \underset{i}\sum r_i \beta_i^{\delta},
\end{align*} 
where $r_i = c_i(1 - \beta_i^{-2}/p)$. Therefore the $p$-Euler factor $D_{2,p}(X)$ can be computed as
\begin{align} \label{Dp}
D_{2,p}(X) = \underset{\delta \geq 0}\sum \lambda^{2}_{p^\delta} X^\delta = \underset{\delta \geq 0}\sum \left( \underset{i,j}\sum  r_i r_j \beta^{\delta}_i \beta^{\delta}_j  \right) X^\delta =\underset{i,j}\sum \frac{r_i r_j}{1 - \beta_i \beta_j X}.
\end{align}
Hence by clearing off the denominator, we get the existence of the Euler factors $H_p(X)$. Clearly the degree in $X$ of $H_p(X)$ is at most $15$. From the equality \[ H_p(X) = \underset{i,j}\prod (1 - \beta_i \beta_j X) \cdot D_{2,p}(X)  \] it is clear that the coefficients of $H_p(X)$ are polynomials in the $\beta_i$. Further since the $\beta_i$ are of absolute value $1$ and $\lambda_{p^\delta}$ are bounded by quantities depending only on $\delta$ (see Remark~\ref{e} for example), we see that for all primes $p$, all the coefficients of $H_p(X)$ are absolutely bounded. 

The reason for the absolute convergence of $H(s)$ in $\sigma>1/2$ is the same as in \cite{du-ko}: the coefficient of $X$ in $H_p(X)$ is zero. Indeed as a polynomial in $X$, differentiating both sides of the equation \[ H_p(X) = D_{2,p}(X) L_p^{-1}(X) \] with respect to $X$ we get  
\begin{align} \label{Hp}
 H'_p(0) = D_{2,p}(0) (L^{-1}_p)'(0) + D'_{2,p}(0) L^{-1}_p(0).  
\end{align}
We note that $D_{2,p}(0) = L_p(0) =1$. From \eqref{Lp}, the coefficient of $X$ in $L^{-1}_p (X)$ is $(L^{-1}_p)'(0) = - \underset{i,j}\sum \beta_i \beta_j = - (\underset{i}\sum \beta _i)^2$. Also by \eqref{Dp} the coefficient of $X$ in $D_{2,p}(X)$ is $D'_{2,p}(0) = \lambda^2_p$. Putting these together, we have by \eqref{Hp} that \[ H'_p(0) = - \big(\underset{i}\sum \beta_i  \big)^2 + \lambda^{2}_p=0.\]
We now turn to the estimate claimed for $H(s)$. In the polynomial defining $H_p(X)$, let $A >0$ be an integer such that the coefficients of $X^2$ are absolutely bounded by $A$ for each prime $p$. Such an $A$ exists by the above. Thus
\begin{align} \label{blabla} | \underset{n \geq 1}\sum h_{p^n} p^{-ns}  |=|H_p(s)| \leq G_p(\sigma), \end{align} where \[ G_p(s) := 1 + A p^{-2s} + |h_{p^3}| p^{-3s} + \cdots |h_{p^{15}}| p^{-15 s}.   \] 
Now let us look at the Dirichlet series 
\[ G_1(\sigma) := \zeta(2 \sigma)^{-A} G(\sigma); \q G(\sigma) = \underset{p} \prod G_p(\sigma) \q (\sigma >1/2). \]
Clearly, the $p$-Euler factor $G_{1,p}(\sigma)$ of $G_1(\sigma)$ satisfies 
\[ G_{1,p}(\sigma) =  (1 - p^{- 2 \sigma})^A G_p(\sigma) = 1 + O(p^{- 3 \sigma}).\]
This shows that the series $G_1(\sigma)$ converges absolutely in $\sigma >1/3$. Hence 
\begin{align} \label{hmm} G(\sigma) \ll \zeta(2 \sigma)^A \ll \sigma^A (\sigma - 1/2)^{-A} \end{align}
for $\sigma>1/2$. Thus the proof of the estimate of $H(s)$ follows from \eqref{blabla} and \eqref{hmm}.
\end{proof}

\begin{proof}[Proof of Theorem~\ref{main}]
From Lemma~\ref{H}, the analytic continuation of $D_2(s)$ follows from the analytic continuation of $L(s)$ to $\mbb{C}$ with simple poles at $s=0,1$ (see \cite[Theorem~5.2.3]{saha}) and the analyticity of $H(s)$ in $\sigma>1/2$. In order to prove that $D_2(s)$ has a simple pole at $s=1$, we claim that it is enough to show that $H(s)$ does not vanish at $s=1$.

Granting the claim for the moment, we show how to complete the proof of Theorem~\ref{main}. First of all, since the pole of $L(s)$ at $s=1$ is simple, the claim shows that the same is true for $D_2(s)$. Thus it follows by Landau's theorem on Dirichlet series with non-negative coefficients that the abscissa of absolute convergence of $D_2(s)$ is $s=1$. Then by the Wiener--Ikehara theorem we obtain for sufficiently large $x$
\begin{align} 
  \underset{n \leq x}\sum \lambda^{2}_n = c_F x + o(x), \label{quantify}
\end{align}
where the constant \[ c_F = \underset{s=1} {\mathrm{Res}} \  D_2(s) = H(1) \cdot \underset{s=1} {\mathrm{Res}} \ L(s) \] is non-zero since $D_2(s)$ has a simple pole at $s=1$ and hence necessarily positive, from \eqref{quantify}. Thus it remains to prove the claim.

{\flushleft \textit{Proof of the claim}:} Suppose to the contrary, i.e., assume that $H(1)=0$. Then $D_2(s)$ has no pole at $s=1$ and since it has non-negative Dirichlet coefficients, by Landau's theorem (since $D_2(s)$ extends to a holomorphic function in a neighbourhood of $s=1$) we get that $D_2(s)$ is convergent for $\sigma > 1 - \varepsilon$ for some $\varepsilon >0$. Then from the above in particular we have for any $\varepsilon'>0$,
\begin{align} \label{hm}
\underset{n \leq x}\sum \lambda_n^{2} \ll_{\varepsilon'} x^{1 - \varepsilon + \varepsilon'}.
\end{align}
This follows from the classical formula for the abscissa of convergence of a Dirichlet series.  

Hence $D_2(1)$ makes sense and is at least $1$. This shows that $H$ has a simple zero at $s=1$. Since $s=1$ is within the region of absolute convergence of $H$, this implies that precisely one Euler factor, say $H_q$, where $q$ is a prime, has a zero at $s=1$ and for primes $p \neq q$, $H_p$ does not vanish at $s=1$.

Let us now remove the $q$-Euler factors from the Dirichlet series under consideration. So we define for $G$, which is one of $D_1,D_2,H$ or $L$,
\[ \tilde{G}(s) := \underset{p \neq q}\prod G_p(s). \]

Now we note that $\tilde{D}_2(s)$ converges absolutely for $\sigma>1$ (since it is a sub-series of $D_2(s)$) and has a pole at $s=1$.  The assertion about the pole follows from the fact that $\tilde{H}$ does not vanish at $s=1$ and that $\tilde{L}$ has a pole at $s=1$, since $L_q$ does not vanish at $s=1$. Indeed, this is immediate as each $q$-Satake parameter has absolute value $1$.

Furthermore $\tilde{D}_2(s)$ has non-negative Dirichlet coefficients. So by Landau's theorem, $\tilde{D}_2$ has abscissa of absolute convergence $\sigma=1$. 

By an argument similar to that in \cite{kohnen} we will prove that the abscissa of absolute convergence of $\tilde{D}_1$ is $s=1$. First of all we easily see that the sum $\underset{(n,q)=1}\sum |\lambda_n|$ diverges. Otherwise the partial sums of the series will be absolutely bounded, which will show that the same is true for the partial sums of $\underset{(n,q)=1}\sum \lambda^{2}_n$ since \[ \underset{(n,q)=1}\sum \lambda^{2}_n \leq \big( \underset{(n,q)=1}\sum | \lambda_n | \big)^2 = O(1).   \]
Hence this will imply that $\tilde{D}_2$ converges absolutely in $\sigma>0$, whereas we have seen above that $\tilde{D}_2$ has it's abscissa of absolute convergence at $s=1$.  

Clearly $\tilde{D}_1$ (being a sub-series of $\tilde{D}_1$) converges absolutely in $\sigma>1$. On the contrary suppose that the abscissa of absolute convergence of $\tilde{D}_1$ is less that $1$. Then we have for some $c>0$
\[ \underset{n \leq x, (n,q)=1}\sum |\lambda_n| \ll x^{1-c}. \]
By Cauchy-Schwartz inequality, we get (as in \cite{kohnen})
\[ \underset{n \leq x, (n,q)=1}\sum \lambda^{2}_n = \underset{n \leq x, (n,q)=1}\sum \lambda^{1/2}_n \times \lambda^{3/2}_n \ll x^{(1-c)/2 + (3 \kappa+1)/2} = x^{1 - c/2 + 3 \kappa/2} \]
for any $\kappa>0$. Choosing $\kappa$ small enough we get a contradiction to the fact that
\[ 1 = \sigma_a(\tilde{D}_2) = \inf \left\{ \alpha \in \mbb{R} \mid  \underset{n \leq x, (n,q)=1}\sum \lambda^{2}_n \ll_\alpha x^\alpha  \right\}.  \]
Note that we can write the above expression for $\sigma_a$ since the series $\underset{n \leq x, (n,q)=1}\sum \lambda^{2}_n$ diverges, since otherwise $\tilde{D}_2(s)$ will converge absolutely for $\sigma>0$. Thus $\sigma_a(\tilde{D}_1)=1$.

We now observe that $D_1(s)$ also has $\sigma_a(D_1) =1$. This is obvious, since
\[ \tilde{D}_1(s) = (1 - q^{-2s-1})^{-1} \underset{i}\prod (1 - \beta_i q^{-s}) \cdot D_1(s); \] 
and for two Dirichlet series $g(s), h(s)$, we have $\sigma_a(gh) \leq \max \{ \sigma_a(g), \sigma_a(h) \}$. The finite product above has $\sigma_a = - 1/2$. So if $\sigma_a(D_1) < 1$, then so will be $\sigma_a(\tilde{D}_1)$. Hence we obtain $\sigma_a(D_1) \geq 1$ and thus $\sigma_a(D_1) = 1$. 

Now we are in a position to complete the proof of the claim. Recall from \eqref{hm} that for any $\varepsilon'>0$, we have $\underset{n \leq x}\sum \lambda_n^{2} \ll_{\varepsilon'} x^{1 - \varepsilon + \varepsilon'}$. This in turn implies by the Cauchy--Schwartz inequality that
\[ \underset{n \leq x}\sum | \lambda_n| \times 1 \ll_{\varepsilon'} x^{(1 - \varepsilon + \varepsilon')/2} \cdot x^{1/2} = x^{ 1-  \varepsilon/2 + \varepsilon'/2}.  \]
Choosing $\varepsilon'$ small enough shows that $D_1(s)$ converges absolutely for $\sigma > 1 - \gamma$ for some $\gamma>0$. This contradicts the fact that $\sigma_a(D_1)=1$. This completes the proof of the claim and hence that of the theorem.
\end{proof}

\begin{proof}[Proof of Corollary~\ref{abs}] The penultimate paragraph of the preceding proof showed that $\sigma_a(D_1)=1$. Now from 
\eqref{defn}, we have $ Z(s) = \zeta(2s+1) D_1(s)$. Hence $\sigma_a(D_1) = \sigma_a(Z)=1$ since $\sigma_a(\zeta^{\pm 1}(2s+1))=0$.  
\end{proof}

\section{The asymptotic result} \label{estimate}
In this section, we show the asymptotic result in Theorem~\ref{asymptotic}. For that, we first prove a \lq\lq convexity\rq\rq estimate for the Rankin--Selberg $L$-function $L(s)$.

\begin{lem} \label{Lest}
Let $2>\sigma >1$. Then for $1-\sigma < \delta < \sigma$ we have,
\begin{align} \label{deltaest}
 L(\delta + it) \ll_{\delta} k^{5(\sigma-\delta)} (\sigma/(\sigma-1))^{16} \ | \delta + it|^{8(\sigma-\delta)} . 
\end{align} 
\end{lem}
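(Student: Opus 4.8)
The plan is to obtain the bound by the Phragmén–Lindelöf convexity principle applied to the completed $L$-function, interpolating between the easy bound in the region of absolute convergence and a bound just to the left of the critical strip. First I would recall the known analytic input from \cite{saha}: $L(s)$ extends meromorphically to $\mbb{C}$ with simple poles only at $s=0,1$, the completed function $\Lambda(s) = \gamma(s) L(s)$ (for an appropriate archimedean factor $\gamma(s)$, a product of $\Gamma$-factors whose shifts depend on $k$) satisfies a functional equation relating $s \leftrightarrow 1-s$, and $\Lambda(s)$ is bounded in vertical strips away from the poles. The degree of $L(s)$ as an $L$-function on $\mrm{GL}(4)\times\mrm{GL}(4)$ is $16$, which is the source of the exponents $16$ and $8$ appearing in \eqref{deltaest}.

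The key steps, in order, are as follows. First I would establish the bound on the line $\sigma$ (to the right of the edge of absolute convergence): since $L(s) = H(s)^{-1} D_2(s)$ and $D_2$ converges absolutely for $\mrm{Re}(s)>1$, and using the factor $(\sigma/(\sigma-1))$ to control the behaviour as $\sigma \to 1^+$, one gets $L(\sigma+it) \ll (\sigma/(\sigma-1))^{C}$ with the $16$ arising from the $16$ Euler factors $(1-\beta_i\beta_j p^{-s})^{-1}$; multiplying over the geometric-series bounds produces the sixteenth power. Second, on the reflected line $1-\sigma$ I would invoke the functional equation: writing $L(1-\sigma-it)$ in terms of $L(\sigma+it)$ via $\Lambda$, the ratio of $\gamma$-factors is estimated by Stirling's formula. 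Because the archimedean parameters are of size $\asymp k$ and there are $16$ gamma factors, Stirling yields a factor of order $(k\,|\delta+it|)^{8(\sigma-(1-\sigma))}$-type growth; this is precisely where the $k^{5(\sigma-\delta)}$ and $|\delta+it|^{8(\sigma-\delta)}$ shapes come from, after tracking the analytic conductor carefully. Third, having the two endpoint bounds, I would apply Phragmén–Lindelöf to the function $\Lambda(s)$ divided by a suitable polynomial $(s(s-1))^{?}$ that cancels the poles at $s=0,1$ and restores boundedness, interpolating linearly in $\sigma-\delta$ across the strip $1-\sigma \le \mrm{Re}(s) \le \sigma$ to land at the line $\mrm{Re}(s)=\delta$.

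The main obstacle I expect is the precise bookkeeping of the $k$-dependence through Stirling's formula. The gamma factors of $L(s,F\otimes F)$ involve shifts by the weight $k$ (reflecting the archimedean Langlands parameters of $F$), so the analytic conductor is of size $\asymp k^{c}(1+|t|)^{16}$ for an explicit $c$; extracting the exponent $5$ on $k$ (as opposed to the exponent $8$ on $|\delta+it|$) requires knowing that the $k$-dependence enters only through a subset of the $\Gamma$-factors, or with smaller shifts, so that the effective $k$-conductor has a different exponent than the $t$-conductor. Getting this constant correct, rather than the naive $8$, is the delicate point; the rest is the standard convexity machinery, and the factor $(\sigma/(\sigma-1))^{16}$ simply propagates unchanged through the interpolation since it is independent of $t$ and bounded on the endpoint lines.
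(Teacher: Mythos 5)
Your proposal follows essentially the same route as the paper: the trivial Euler-product bound $L(\sigma+it)\ll(\sigma/(\sigma-1))^{16}$ on $\mrm{Re}(s)=\sigma$ (from the $16$ factors $(1-\beta_i\beta_j p^{-s})^{-1}$ with $|\beta_i\beta_j|=1$ and $\zeta(\sigma)\leq\sigma/(\sigma-1)$), the functional equation plus Stirling estimate of the ratio $L_\infty(s)/L_\infty(1-s)\ll k^{5(2\sigma-1)}|1+it|^{8(2\sigma-1)}$ on the reflected line $1-\sigma$ --- where your observation that the $k$-dependence enters only through the $\Gamma$-factors shifted by $\asymp k$ (here five $\Gamma_{\mbb{C}}$-factors out of the degree-$16$ archimedean factor in \eqref{fnleqn}), so that the $k$-exponent is $5$ rather than $8$, is precisely the correct bookkeeping --- and finally a Phragm\'en--Lindel\"of interpolation, which the paper carries out via the strong convexity principle of \cite[Lemma~1]{ko-se} and you via the equivalent device of dividing the completed function by a polynomial cancelling the poles at $s=0,1$. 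The only stray remark is your suggested alternative derivation of the right-line bound from $L(s)=H(s)^{-1}D_2(s)$, which would require a lower bound on $H(s)$ that is not available; but the direct Euler-product estimate you also give is the correct (and the paper's) argument, so nothing is lost.
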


\begin{proof}
Let us recall (see \cite[p.79]{saha}) that the completed Rankin--Selberg $L$-function $L^*(s)$, has an analytic continuation to $\mbb{C}$ except simple poles at $s=0,1$. Moreover, it satisfies the following functional equation:
\begin{align} & L^*(s) = L^*(1-s), \text{ where } L^*(s) = L_{\infty}(s)L(s); \label{fnleqn}\\
& L_{\infty}(s) = (\Gamma_{\mbb{R}}(s) \Gamma_{\mbb{R}}(s+1) )^2 ( \Gamma_{\mbb{C}}(s+k-1) \Gamma_{\mbb{C}}(s+k-2))^2  \Gamma_{\mbb{C}}(s+1) \Gamma_{\mbb{C}}(s+2k-3) \nonumber
\end{align}
where \[ \Gamma_{\mbb{R}}(s) = \pi^{-s/2} \Gamma(s/2), \q \Gamma_{\mbb{C}}(s) = 2 (2 \pi)^{-s} \Gamma(s).  \]
Now from \eqref{fnleqn} we have $| L(1-s)| = |L_\infty(s)/ L_\infty(1-s)| |L(s)|$. We next recall the (trivial) bound: 
\[ |L(\sigma + it)| \leq (\sigma/(\sigma-1))^{16} \q (\sigma>1),\]
which is obtained using \eqref{Lp} with the fact that the Satake parameters $\beta_j$ therein are of absolute value $1$; and the trivial estimate $|\zeta(\sigma)| \leq \sigma/(\sigma-1)$ for all $\sigma>1$.  

We then proceed in the usual manner (see \cite{ko-se} for example) by estimating the ratios of Gamma factors to get 
\[ |L_\infty(s)/ L_\infty(1-s)| \ll k^{5(2 \sigma-1)} |1 + it|^{ 8(2 \sigma-1)}. \]
Putting these estimates together, we arrive at 
\begin{align*} 
L(1-\sigma +it) \ll k^{5(2 \sigma-1)} |1 + it|^{8(2 \sigma-1)} (\sigma/(\sigma-1))^{16}.
\end{align*}
The \lq\lq strong convexity\rq\rq principle (see \cite[Lemma~1]{ko-se} for example) then completes the proof. We omit the details.
\end{proof}

\begin{proof}[Proof of Theorem~\ref{asymptotic}]
The proof is by the truncated version of Perron's formula given in \cite{liu}. Let us write for $x,\sigma >1$, $A(x) :=  \underset{n \leq x}\sum \lambda^{2}_n$. 

Let us recall the version of Perron's formula we will use here in the present setup. For $x \geq 2$ and $S,T \geq 2$ we have that
\begin{align*}
A(x) = \frac{1}{2 \pi i} \int_{\sigma - i T}^{\sigma + i T} D_2(s) \frac{x^s}{s} ds + O \Big( \underset{x - x/S \leq n < x + x/S}\sum \lambda^2_n \Big) + O \Big( \frac{x^{\sigma} S D_2(\sigma)}{T} \Big).
\end{align*}

First we note that for $\sigma >1$ (resp. $\delta > 1/2$) by partial summation (resp. by Lemma~\ref{H}), 
\begin{align} \label{d2h}
 D_2(\sigma) \ll \frac{\sigma}{\sigma -1}; \q H(s) \ll \delta^A (\delta - 1/2)^{-A},  \ A>0. 
\end{align} 

We wish to shift the line of integration to the line $\sigma = \delta$, with $1/2<\delta<1$. Using the fact that $H(s)$ converges absolutely in $\sigma>1/2$, by the residue theorem we get, $A(x) = cx + I_1 + I_2(+) - I_2(-) + I_3 +I_4$, where
\begin{align*}
&I_1 =  \frac{1}{2 \pi i} \int_{\delta - i T}^{\delta + i T} D_2(s) \frac{x^s}{s} ds, \q  I_2(\pm) =  \pm \frac{1}{2 \pi i} \underset{\mrm{Im}(s) = \pm T, \delta \leq \mrm{Re}(s) \leq \sigma}\int D_2(s) \frac{x^s}{s} ds \\
& I_3 = O \Big( \underset{x - x/S \leq n < x + x/S}\sum \lambda^2_n \Big), \q I_4 =  O \Big( \frac{x^{\sigma} S D_2(\sigma)}{T} \Big).
\end{align*}

We set $T = x^{\alpha}, S= x^\beta$ ($\alpha>\beta>0$) which will be chosen later. To estimate $I_1$, we use the factorization $D_2(s) = H(s) L(s)$ obtained in Lemma~\ref{H} and the corresponding bounds for $L(s)$ and $H(s)$ from \eqref{deltaest} and \eqref{d2h} respectively. We get,   
\begin{align*}
I_1 &\ll (\delta-1/2)^{-A} k^{5(\sigma-\delta)} (\sigma/(\sigma-1))^{16} \int_{- T}^{ T} \frac{| \delta + it|^{8(\sigma-\delta)}}{|\delta+ it|} dt \cdot x^\delta\\
 & \ll k^{5(\sigma-\delta)} T^{5(\sigma-\delta)} x^\delta \ll k^{5(\sigma-\delta)} x^{8\alpha(\sigma-\delta) + \delta},
\end{align*}
and similarly
\begin{align*}
I_2 \ll (\delta-1/2)^{-A} k^{5(\sigma-\delta)} (\sigma/(\sigma-1))^{16} T^{8(\sigma-\delta)-1} x^\sigma \ll k^{5(\sigma-\delta)} x^{\sigma - \alpha + 8\alpha(\sigma-\delta) }.
\end{align*}

For estimating $I_3$, we fix a small $\eta>0$ and note that $|\lambda^2_n| \ll_\eta n^\eta$ for all $n$. Thus
\begin{align*}
&I_3 \ll (x + x/S)^{1+ \eta} - (x - x/S)^{1+ \eta} = x^{1+ \eta} \big(2(1 + \eta)/S + O(S^{-3}) \big) \ll x^{1+ \eta - \beta},\\
& I_4 \ll \left(\frac{\sigma}{\sigma-1}\right)^{16} x^{\sigma +\beta - \alpha} \ll x^{\sigma +\beta - \alpha}. 
\end{align*}

Now choosing \[ \sigma = 1 + \eta, \ \delta = 15/16 + \eta, \ \alpha = 1/16, \ \beta = 1/32, \q (\eta >0, \text{ small})\]
we see that the exponents of $x$ arising from $I_1, I_2, I_3$ and $I_4$ are all equal to $31/32 + \eta$. This produces the error term to be $O_\eta \big ( k^{5/16} x^{31/32 + \eta} \big)$ and completes the proof of the theorem. 
\end{proof}

\begin{rmk} 
We now make a remark about sequences which are bounded away from zero. Let $(a_n)_{n \geq 1}$ be a sequence of real numbers with the property that for large $x$,
\[ B(x) := \underset{1 \leq n \leq x}\sum a^2_n = c x + o(x), \q c>0.  \]
Then there exist $\alpha >0$ such that $ |a_n| \geq \alpha$, for infinitely many $n$. In other words, the sequence $(a_n)$ is bounded away from $0$. Thus, as an application of Theorem~\ref{asymptotic}, it follows that the sequence $(\lambda_n)_{n \geq 1}$ is bounded away from zero. We note that in fact one can take $\alpha = 1/16$. This is due to \cite[Lemma~5.3]{qu}. 
\end{rmk}




We next collect a few straight forward implications of Theorem~\ref{asymptotic} concerning the upper bounds of the Hecke eigenvalues. For a subset $A$ of the set of the natural numbers, we define the upper natural density of $A$ by:  
\begin{align*}  \bar{\delta}_{\mrm{Nat}}(A) :=  \underset{X \mapsto \infty}{ \lim \sup} \frac{ \# \{ n \leq X \mid n \in A \} } { \# \{ n \leq X \} }.
\end{align*}

\begin{cor} \label{away}
(i) \ There exist infinitely many positive integers $n$ such that \[ |\lambda_n| \leq \sqrt{d(n)},   \]
where $d(n)$ denotes the number of divisors of $n$. \\
(ii) \ On a set of positive upper natural density, we have \[ |\lambda_n| \leq \sqrt{\log n}.  \]
\end{cor}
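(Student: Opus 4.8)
The plan is to deduce both parts of Corollary~\ref{away} from the asymptotic $\underset{n \leq x}\sum \lambda_n^2 = c_F x + o(x)$ of Theorem~\ref{asymptotic}, by playing off the constant average size of $\lambda_n^2$ against the larger averages of $d(n)$ and $\log n$. The governing heuristic is that since $\underset{n \leq x}\sum \lambda_n^2 \sim c_F x$ grows only linearly, whereas both $\underset{n \leq x}\sum d(n) \sim x \log x$ and $\underset{n \leq x}\sum \log n \sim x \log x$, the opposite inequality $\lambda_n^2 > d(n)$ (resp. $\lambda_n^2 > \log n$) cannot persist on too large a set of $n$.

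For part (i) I would argue by contradiction. Suppose $|\lambda_n| > \sqrt{d(n)}$, i.e. $\lambda_n^2 > d(n)$, for all but finitely many $n$, say for all $n \geq N_0$. Summing over $n \leq x$ and invoking Dirichlet's classical estimate $\underset{n \leq x}\sum d(n) = x \log x + O(x)$ yields $\underset{n \leq x}\sum \lambda_n^2 \geq \underset{n \leq x}\sum d(n) - O(1) = x \log x + O(x)$. For $x$ large this contradicts the linear growth $\underset{n \leq x}\sum \lambda_n^2 = c_F x + o(x)$ furnished by Theorem~\ref{asymptotic}. Hence $|\lambda_n| \leq \sqrt{d(n)}$ must hold for infinitely many $n$.

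For part (ii), set $B := \{ n : \lambda_n^2 \leq \log n \}$ and let $A$ denote its complement. I would again argue by contradiction, assuming $\bar{\delta}_{\mrm{Nat}}(B) = 0$, so that $\#\{ n \leq x : n \in B\} = o(x)$ and hence $\#\{ n \leq x : n \in A \} = x - o(x)$. Restricting to the dyadic block $x/2 < n \leq x$, the number of $n \in A$ lying in this range is at least $x/2 - o(x)$, and each such $n$ satisfies $\lambda_n^2 > \log n > \log(x/2)$. Therefore $\underset{n \leq x}\sum \lambda_n^2 \geq \underset{x/2 < n \leq x,\, n \in A}\sum \lambda_n^2 > \log(x/2) \cdot (x/2 - o(x)) \gg x \log x$, again contradicting Theorem~\ref{asymptotic}. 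Thus $\bar{\delta}_{\mrm{Nat}}(B) > 0$, which is precisely the assertion.

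These are short averaging-and-contradiction arguments, so there is no serious analytic obstacle once Theorem~\ref{asymptotic} is available. The one point demanding care is the density bookkeeping in part (ii): one must upgrade the mere infinitude of good $n$ into a statement of positive upper density, and the device that achieves this is the passage to the dyadic interval $x/2 < n \leq x$, on which $\log n$ is comparable to $\log x$. It is this localization that converts the asymptotic for the full sum into a genuine positive-density conclusion rather than a bare infinitude conclusion.
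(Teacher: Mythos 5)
Your proposal is correct and follows essentially the same route as the paper: both parts are proved by contradiction, playing the linear asymptotic $\sum_{n \leq x} \lambda_n^2 = c_F x + o(x)$ of Theorem~\ref{asymptotic} against the $x \log x$ growth of $\sum_{n \leq x} d(n)$ and $\sum_{n \leq x} \log n$. The only cosmetic difference is in part (ii), where you localize to the dyadic block $x/2 < n \leq x$ so that $\log n \geq \log(x/2)$, whereas the paper bounds the contribution of the density-zero exceptional set directly by $O(u(x)\log x)$ --- both devices produce the same $\gg x \log x$ lower bound and hence the same contradiction.
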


\begin{proof}
(i) \ Suppose not. Then there exist a positive integer $n_0$ such that for all $n > n_0$, $|\lambda_n| > \sqrt{d(n)}$. This, along with Theorem~\ref{asymptotic} shows that for large $x$
\[ x \log x + O(x) = \underset{n \leq x}\sum d(n) \leq  \underset{n \leq x}\sum \lambda^{2}_n = c_F x + o(x),  \]
a contradiction. 

(ii) \ Suppose not. Then the set $A := \{ n \mid |\lambda_n| \leq \sqrt{\log n}  \}$ has upper natural density $0$. Let us denote $u(x) := \# \{ n \leq x \mid n \in A   \}$. It follows that for large $x$, \[  \underset{x \mapsto \infty}\limsup \ u(x)/x = 0. \]
Since $ \underset{n \leq x}\sum \log n = x \log x + O(x)$, we have, 
\[ \underset{n \leq x, n \not \in A}\sum \log n = x \log x + O(x) + O(u(x) \log x) .  \]
Hence, as in the proof of (i), for large $x$
\[ x \log x + O(x) + O(u(x) \log x) = \underset{n \leq x, n \not \in A}\sum \log n \leq  \underset{n \leq x}\sum \lambda^{2}_n =c_F x + o(x) , \]
a contradiction.
\end{proof}

\begin{rmk} \label{e}
Let us write $Z(s) = \sum b_n n^{-s}$ as a (normalized as before) Dirichlet series, $\sigma>1$. Then, one can easily get an upper bound for the $b_n$'s by comparing $Z(s)$ with $\zeta^4(\sigma)$: \[ |b_n| \leq d_4(n), \q d_4(n) := \underset{ \substack{ {a,b,c,d}  \\ abcd=n} }\sum 1.\]
Hence, from \eqref{defn}, we get an upper bound for the $\lambda_n$'s in a straightforward manner:
\[ |\lambda_n| \leq \underset{ \substack{ m \text{ sq-free}  \\  m^2 \mid n } }\sum  \frac{|\mu(m)|}{m} d_4(n/m^2).  \]
We see that this trivial bound does not imply any of the conclusions of Corollary~\ref{away}. Hence the interest in them.
\end{rmk}

\end{document}